\definecolor{darkred}{rgb}{0.9,0.,.2}
\definecolor{darkblue}{rgb}{0.,0.,.6}
\definecolor{darkgreen}{rgb}{0.,.6,0.1}
\newcommand{\N}{\mathbb{N}}
\newcommand{\R}{\mathbb{R}}
\newcommand{\C}{\mathcal{C}} 
\renewcommand{\ss}{\mathrm{SL_{n+1}(\mathbb{R})}}
\newcommand{\sss}{\mathrm{SL}}
\newcommand{\G}{\Gamma}
\newcommand{\g}{\gamma}
\renewcommand{\C}{\mathcal{C}}
\newcommand{\U}{\mathcal{U}}
\newcommand{\V}{\mathcal{V}}
\newcommand{\E}{\mathcal{E}}
\renewcommand{\O}{\Omega}
\renewcommand{\d}{d_{\Omega}}
\newcommand{\PP}{\mathbb{P}}
\newcommand{\Quo}{\Omega/\!\raisebox{-.90ex}{\ensuremath{\Gamma}}}
\newcommand*{\Quotient}[2]{\ensuremath{#1/\!\raisebox{-.90ex}{\ensuremath{#2}}}}
\newcommand{\Aut}{\textrm{Aut}}
\newcommand{\Stab}{\textrm{Stab}}
\newcommand{\Isom}{\textrm{Isom}}
\theoremstyle{plain}
\newtheorem{theorem}[section]{Théorème}
\theoremstyle{definition}
\theoremstyle{remark}
\newtheorem*{rem}{Remarque}
\title{Un lemme de Kazhdan-Margulis-Zassenhaus pour les géométries de Hilbert}
\author{
\href{mailto:crampon@math.u-strasbg.fr}{Mickaël Crampon}
}
\author{
\href{ludovic.marquis@univ-rennes1.fr}{Ludovic Marquis}
}
\date{} 
\address{\newline{}}
\address{\newline{}}
\email{crampon@math.u-strasbg.fr \\ ludovic.marquis@univ-rennes1.fr \newline{} }
\address{\newline{}}
\urladdr{
http://mikl.crampon.free.fr/
\href{http://mikl.crampon.free.fr/}{\Mundus}
http://perso.univ-rennes1.fr/ludovic.marquis
\href{http://perso.univ-rennes1.fr/ludovic.marquis}{\Mundus}
}
\begin{document}
\frontmatter





\begin{abstract}
On montre un lemme de Kazhdan-Margulis-Zassenhaus pour les géométries de Hilbert. Plus précisément, en toute dimension $n$, il existe une constante $\varepsilon_n > 0$ telle que, pour tout ouvert proprement convexe $\O$, pour tout point $x \in \O$, tout groupe discret engendré par un nombre fini d'automorphismes de $\O$ qui déplacent le point $x$ de moins de $\varepsilon_n$ est virtuellement nilpotent.
\end{abstract}

\begin{altabstract}
We prove a Kazhdan-Margulis-Zassenhaus lemma for Hilbert geometries. More precisely, in every dimension $n$ there exists a constant $\varepsilon_n > 0$ such that, for any properly convex open set $\O$ and any point $x \in \O$, any discrete group generated by a finite number of automorphisms of $\O$, which displace $x$ at a distance less than $\varepsilon_n$, is virtually nilpotent. 
\end{altabstract}

\maketitle


\mainmatter

\section*{Introduction}
Beaucoup d'attention a été portée jusqu'ici aux quotients compacts des géométries de Hilbert, plus connus sous le nom de \emph{convexes divisibles}. Suite aux premiers travaux de Victor Kac et \`Ernest Vinberg \cite{MR0208470}, Jacques Vey \cite{MR0283720} et Jean-Paul Benzécri \cite{MR0124005}, on peut citer entre autres ceux de William Goldman \cite{MR1053346}, Suhyoung Cho\"i \cite{MR1285533,MR1293655,MR2247648}, Choi-Goldman \cite{MR1145415,MR1414974}, et plus récemment d'Yves Benoist \cite{MR1767272,MR2010735,MR2094116,MR2195260,MR2218481,MR2295544} et Misha Kapovich \cite{MR2350468}.\\
En revanche, l'étude géométrique des quotients non compacts n'a été encore que peu abordée, si ce n'est dans les contributions récentes du second auteur et de Cho\"i. Dans \cite{Marquis:2009kq} et \cite{MR2740643}, ce sont les surfaces de volume fini qui sont à l'honneur; dans \cite{Marquis:2010fk}, des exemples de quotients de volume fini sont construits en toute dimension, suivant une idée de \cite{MR900823}; dans \cite{Choi:2010fk}, Cho\"i se donne une variété différentielle et étudie ses possibles structures projectives convexes. Dans l'article \cite{Crampon:2012fk} qui suivra celui-ci, nous introduirons et décrirons la notion de finitude géométrique pour les quotients de certaines géométries de Hilbert. Ainsi, notre point de vue sera opposé à celui de Cho\"i: nous partirons d'une géométrie de Hilbert $(\O,\d)$ donnée et il s'agira de comprendre la géométrie de ses éventuels quotients.\\
En fait, l'article présent ne devait être qu'une partie du suivant, mais nous avons décidé de l'écrire à part, d'abord pour l'intérêt intrinsèque de son résultat mais aussi pour raccourcir l'autre (ou l'inverse).\\\\

L'étude des géométries de Hilbert tire nombre de ses inspirations de la géométrie riemannienne hyperbolique ou de courbure négative. Le lemme de Kazhdan-Margulis-Zassenhaus est un résultat essentiel dans la compréhension des variétés riemanniennes de courbure négative ou nulle. Le but de cet article est de montrer un tel lemme pour les géométries de Hilbert:

\begin{theorem}\label{lem_mar}
En toute dimension $n$, il existe une constante $\varepsilon_n > 0$ et un entier $m_n$ tels que, pour tout ouvert proprement convexe $\O$, pour tout sous-groupe discret $\G$ de $\Aut(\O)$, pour tout point $x \in \O$, le groupe $\G_{\varepsilon_n}$ engendr\'e par l'ensemble $\{\g\in \G,\ d_{\O}(x,\g \cdot x) \leqslant \varepsilon_n\}$ est virtuellement nilpotent. De plus, on peut trouver un sous-groupe nilpotent de $\G_{\varepsilon_n}$ d'indice inférieur ou égal à $m_n$.
\end{theorem}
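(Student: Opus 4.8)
The plan is to transport the problem into the fixed Lie group $G = \spm{n+1}$, in which the classical Kazhdan--Margulis--Zassenhaus lemma is available, and to use Benz\'ecri's compactness theorem (\cite{MR0124005}) to make every estimate uniform in $\O$. Since the Hilbert metric $d_{\O}$, the property of being discrete, and the displacement $d_{\O}(x,\g\cdot x)$ are all unchanged when the triple $(\O,x,\G)$ is replaced by $(g\O,g\cdot x,g\G g^{-1})$ for $g\in G$, and since the conclusion is itself conjugation-invariant, I would first use Benz\'ecri's theorem --- the action of $G$ on the space of pointed properly convex open sets is proper and cocompact --- to reduce to the case where $(\O,x)$ lies in a fixed compact family $\mathcal{K}$.

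The geometric core of the reduction is then the following uniform estimate: there is a fixed compact set $\C\subset G$ and a function $\delta(\varepsilon)\to 0$ as $\varepsilon\to 0$ such that, for every $(\O,x)\in\mathcal{K}$ and every $\g\in\Aut(\O)$, the inequality $d_{\O}(x,\g\cdot x)\leq\varepsilon$ forces $\g$ to lie within $\delta(\varepsilon)$ of $\C$ for a fixed left-invariant metric on $G$. I would establish this by contradiction: a sequence $\g_k\in\Aut(\O_k)$ with $(\O_k,x_k)\in\mathcal{K}$ and $d_{\O_k}(x_k,\g_k\cdot x_k)\to 0$ satisfies $\g_k\cdot(\O_k,x_k)=(\O_k,\g_k\cdot x_k)$, which converges to the same limit as $(\O_k,x_k)$; properness of the $G$-action then keeps $\g_k$ bounded in $G$, and any accumulation point fixes the limiting point and preserves the limiting convex set, hence lies in $\C:=\bigcup_{(\O,x)\in\mathcal{K}}\Stab_{\Aut(\O)}(x)$. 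This set is compact because each stabilizer is compact (the action of $\Aut(\O)$ on $(\O,d_{\O})$ is proper) and because their union is relatively compact by the same properness. A first consequence is that $\{\g\in\G : d_{\O}(x,\g\cdot x)\leq\varepsilon\}$ is finite, so $\G_{\varepsilon}$ is finitely generated, as announced.

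With the generators of $\G_{\varepsilon_n}$ confined to a shrinking neighborhood $\C\cdot V_{\delta(\varepsilon_n)}$ of the compact set $\C$, I would now invoke the Kazhdan--Margulis--Zassenhaus lemma for the fixed group $G$, whose proof combines Zassenhaus's commutator-contraction with a Jordan-type finiteness. The Zassenhaus part provides a neighborhood $U$ of the identity on which iterated commutators contract, so that the subgroup generated by those generators lying in $U$ is nilpotent; choosing $\varepsilon_n$ so that $\delta(\varepsilon_n)$ is smaller than the Zassenhaus radius handles this near-identity (``unipotent'') part. To upgrade nilpotency to virtual nilpotency with index at most $m_n$, one separates off the compact (``elliptic'') part carried by $\C$: using that finite subgroups of $\mathrm{GL}_{n+1}(\R)$ have abelian subgroups of index bounded by a constant depending only on $n+1$, one extracts a subgroup of $\G_{\varepsilon_n}$ of index at most $m_n$ all of whose generators fall into $U$, hence nilpotent. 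All constants here live in the single ambient group $G$ and therefore depend only on $\dim G=(n+1)^2-1$, i.e. only on $n$.

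The main obstacle, and the reason the statement is not a verbatim copy of the Riemannian Margulis lemma, is the uniformity in $\O$: both $\varepsilon_n$ and $m_n$ must be independent of the convex set. The two classical ingredients already live in the fixed group $G$ and are automatically uniform once the problem has been pushed there; the real work is the second paragraph, namely converting the $\O$-dependent smallness condition $d_{\O}(x,\g\cdot x)\leq\varepsilon$ into a single fixed condition in $G$. Benz\'ecri's compactness theorem is exactly what makes this conversion uniform, and the delicate points to verify are that no mass escapes to infinity in $G$ under the limiting process and that the limiting transformation genuinely preserves the limiting convex body, so that the compact set $\C$ is well defined.
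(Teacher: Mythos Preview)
Your plan is essentially the paper's: reduce via Benz\'ecri to a compact family of pointed convex sets, convert small Hilbert displacement into proximity of $\g$ to $\Stab_{\Aut(\O)}(x)$ inside $\ss$ (your properness/compactness argument is precisely Lemmas \ref{lem_mar1}--\ref{lem_mar2} of the paper), and then apply Zassenhaus. That part is fine and matches the paper closely.

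The step that does not go through is your passage from nilpotent to virtually nilpotent with a uniform index bound. Jordan's theorem is about \emph{finite} subgroups of $\mathrm{GL}_{n+1}(\R)$, but $\G_{\varepsilon_n}$ is typically infinite, its generators need not lie in any stabilizer, and there is no homomorphism from $\G_{\varepsilon_n}$ to a compact or finite group to which Jordan could be applied; nothing in your sketch actually produces a bounded-index subgroup ``all of whose generators fall into $U$''. The paper handles this differently. First, it sharpens your compact set $\mathcal{C}$: for \emph{standard} pairs (centroid at the origin, inertia ellipsoid the unit ball) every stabilizer is a closed subgroup of the fixed compact group $\mathrm{SO}_n(\R)$ (Remark \ref{rem_derniere_minute}), so by Lemma \ref{lem_groupe_compact} each stabilizer $K$ is covered by at most $m$ translates $k_i\,\U$ of the Zassenhaus neighborhood, with $m$ independent of $(\O,x)$. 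Second, it takes $\Lambda=\langle\G\cap\U\rangle$ as the nilpotent subgroup and bounds its index by a pigeonhole on cosets (Lemma \ref{lem_ber}): if $[\G_{\varepsilon_n}:\Lambda]>m$, words of length $\leq m$ in the generators would represent $m+1$ distinct $\Lambda$-cosets, but such words lie in $W^m\subset\bigcup_i k_i\,\U$ (here one uses that $K$ is a \emph{group}, so $K^m=K$ and a small enough neighborhood $W$ of $K$ satisfies $W^m\subset\bigcup_i k_i\,\U$), forcing two of them into the same $k_i\,\U$ and hence into the same $\Lambda$-coset. This covering-plus-pigeonhole argument, not Jordan, is what yields the uniform $m_n$.
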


Le lemme de Kazhdan-Margulis-Zassenhaus pour les espaces symétriques a été démontré par David Kazhdan et Gregory Margulis dans \cite{MR0223487} en utilisant un résultat de Hans Julius Zassenhaus publié dans \cite{64.0961.06}, que nous utiliserons sous la forme du lemme \ref{lem_zas}. Il a été montré pour les espaces de Hadamard par Margulis; on pourra trouver dans \cite{MR0492072} quelques remarques, Margulis n'ayant pas publié de preuve complète. La page 150 du livre \cite{MR1253544} de Misha Gromov fournit un bref rappel historique de ce lemme.

Les lecteurs intéressés par une preuve complète peuvent consulter \cite{MR0507234} pour les espaces symétriques et \cite{MR823981} pour les variétés de Hadamard. 

Suhyoung Choï \cite{MR1405450} avait obtenu un lemme de Kazhdan-Margulis-Zassenhaus pour les géométries de Hilbert divisibles en dimension 2 et notre démonstration du théorème \ref{lem_mar} s'inspire d'une de ses idées. Une d\'emonstration ind\'ependante, bien qu'assez proche de la n\^otre, du th\'eor\`eme \ref{lem_mar} appara\^it \'egalement dans un travail r\'ecent de Daryl Cooper, Darren Long, and Stephan Tillmann \cite{Cooper:2011fk} sur les vari\'et\'es projectives convexes non compactes.\\

Pour terminer cette introduction, disons bri\`evement comment nous utiliserons ce r\'esultat dans \cite{Crampon:2012fk}. Soient $\O$ un ouvert proprement convexe de $\PP^n$, $\G$ un sous-groupe discret de $\Aut(\O)$ et $M=\Quo$ le quotient correspondant.\\
Fixons un r\'eel $\varepsilon > 0$ pour lequel le théorème \ref{lem_mar} s'applique. On note:
\begin{itemize}
 \item pour $x\in\O$, $\G_{\varepsilon}(x)$ le groupe engendré par les éléments $\g \in \G$ tels que $d_{\O}(x,\g \cdot x) < \varepsilon$ ;
 \item $\O_{\varepsilon} = \{  x \in \O \,\mid \, \G_{\varepsilon}(x) \textrm{ est infini} \}$;
 \item $M_{\varepsilon}=\Quotient{\O_{\varepsilon}}{\G}$ la projection de cet ensemble sur $M=\Quo$.

\end{itemize}
\par{
La partie $M_{\varepsilon}$ est la \emph{partie fine} de $M$. Dans le cas où $M$ est une variété, autrement dit quand $\G$ est sans torsion, c'est l'ouvert des points de $M$ dont le rayon d'injectivité est strictement inférieur à $\varepsilon/2$. Son complémentaire $M^{\varepsilon}$ est la \emph{partie épaisse} de $M$.
}
\par{
Le théorème \ref{lem_mar} permet de voir que le groupe fondamental de toute composante connexe de $M_{\varepsilon}$ est virtuellement nilpotent. Or, dans le cas o\`u $\O$ est strictement convexe et à bord $\C^1$, il n'est pas très difficile de classifier les sous-groupes discrets nilpotents de $\Aut(\O)$. Ce sont les premiers pas pour d\'ecrire la topologie et la g\'eom\'etrie de la partie fine.
}

\section{Géométrie de Hilbert}\label{geo_hilbert}
\par{
Cette partie constitue une introduction très rapide à la géométrie de
Hilbert. Pour une introduction beaucoup plus complète, on pourra lire \cite{MR2270228} ou le livre \cite{MR0075623}.
}


\par{
Une \emph{carte affine} $A$ de l'espace projectif réel $\PP^n(\R) = \PP^n$ est le complémentaire d'un hyperplan projectif. Une carte affine possède une structure naturelle d'espace affine. Un ouvert $\O$ de $\PP^n$ différent de $\PP^n$ est \emph{convexe} lorsqu'il est inclus dans une carte affine et qu'il est convexe dans cette carte. Un ouvert convexe $\O$ de $\PP^n$ est dit \emph{proprement convexe} lorsqu'il existe une carte affine contenant son adhérence $\overline{\O}$. Autrement dit, un ouvert convexe est proprement convexe lorsqu'il ne contient pas de droite affine. 
}
\\
\label{base}
\par{
Sur un ouvert proprement convexe $\O$ de $\PP^n$, la distance de Hilbert est définie de la façon suivante. Soient $x, y$ deux points distincts de $\O$; on note $a,b$ les points d'intersection de la droite $(xy)$ et du bord $\partial \O$ de $\O$ tels que $x$ soit entre $b$ et $y$ et $y$ entre $x$ et $a$ (voir figure \ref{dis}). La distance entre $x$ et $y$ est définie par la formule
}

$$
d_{\O}(x,y) = \frac{1}{2}\ln [a:b:x:y],
$$
où $[a:b:x:y]=\displaystyle\frac{\|a-x \|\cdot \|b-y\|}{\| a-y \| \cdot \| b-x \|}$ désigne le birapport des points $a,b,x,y$; $\| \cdot \|$ est une norme euclidienne quelconque sur une carte affine qui contient l'adhérence $\overline{\O}$ de $\O$.

\begin{center}
\begin{figure}[h!]
  \centering
\includegraphics[trim=0cm 0cm 0cm 12cm, clip=true, width=7cm]{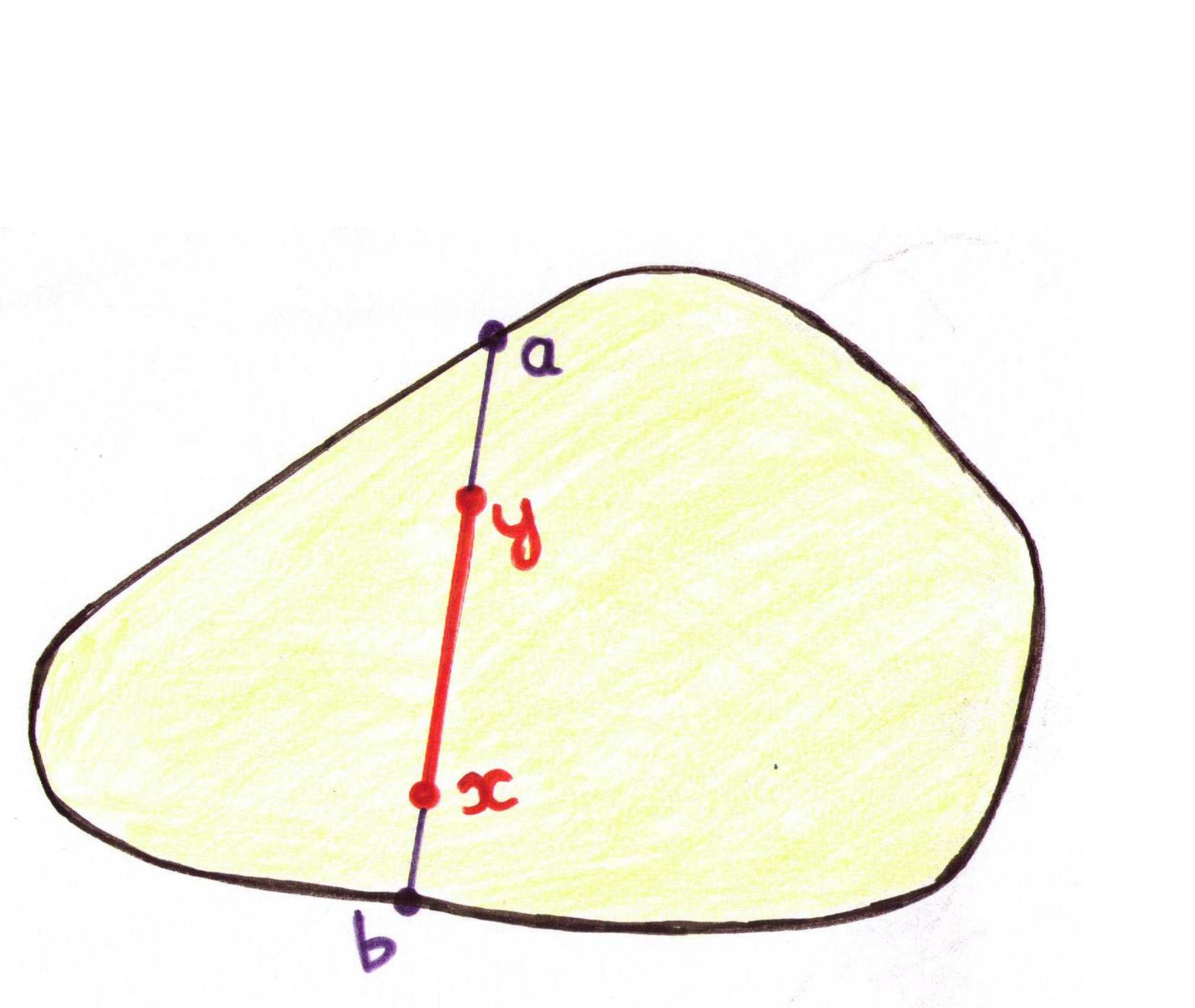}  
\caption{La distance de Hilbert
\label{dis}
}
\end{figure}
\end{center}

Il est clair que $d_{\O}$ ne dépend ni du choix de la carte, ni du choix de la norme euclidienne sur cette carte.\\ 

L'espace $(\O,d_{\O})$ ainsi construit est un espace métrique complet dont la topologie coïncide avec celle induite par $\PP^n$. Le groupe $\Aut(\O)$ des transformations projectives de $\ss$ qui préservent $\O$ est un sous-groupe fermé de $\ss$ qui agit par isométrie sur $(\O,d_{\O})$; il agit donc proprement sur $\O$.

%
%
%

\begin{rem}
\par{
Le groupe $\Isom(\O,\d)$ des isométries de l'espace métrique $(\O,d_{\O})$ n'est pas à priori r\'eduit \`a $\Aut(\O)$. C'est toutefois le cas si le convexe $\O$ est strictement convexe, ou un polytope, en \'ecartant le cas du simplexe pour lequel $\Aut(\O)$ est d'indice 2 dans le groupe $\Isom(\O,\d)$ (voir \cite{MR1238518} et \cite{MR2819195}). Dans d'autres cas bien particuliers, qui correspondent aux convexes homogènes, auto-duaux et non strictement convexes, le groupe $\Aut(\O)$ est aussi d'indice $2$ dans le groupe $\Isom(\O,\d)$ (voir \cite{Bosche:2012uq}). On pense qu'en général $\Aut(\O)$ est d'indice au plus $2$ dans $\Isom(\O,\d)$.
}
\\
\par{
Le th\'eor\`eme \ref{lem_mar} concerne le groupe $\Aut(\O)$. Notre méthode utilise le groupe $\ss$ et son action sur l'espace projectif $\PP^n$, et elle ne s'applique pas au groupe $\Isom(\O,\d)$. Le th\'eor\`eme \ref{lem_mar} reste toutefois vrai, comme conséquence d'un travail bien plus général d'Emmanuel Breuillard, Ben Green et Terence Tao \cite{pre06120992}. Ils montrent en effet que le lemme de Kazhdan-Margulis-Zassenhaus est vrai dans tout espace métrique qui satisfait la propriété suivante: il existe $K\in\N$ tel que toute boule de rayon $4$ est recouverte par au plus $K$ boules de rayon $1$. 
Voyons que les géométries de Hilbert ont cette propriété. Pour tout $R>0$, il existe des constantes $0<v(R)\leqslant V(R)$ telles que le volume de toute boule de rayon $R$ d'une quelconque géométrie de Hilbert est compris entre $v(R)$ et $V(R)$ (c'est une conséquence du théor\`eme de Benzécri \ref{theo_ben}; un résultat plus précis a été montré dans \cite{MR2245997}). Si $B$ est une boule de rayon $4$, le cardinal d'une partie $\{x_i\}$ $1/2$-séparée maximale de $B$ est alors majoré par un certain $K$ (indépendant de la géométrie et de la boule choisies), puisque les boules de rayon $1/4$ centrées en les $x_i$ sont disjointes. Par maximalité, les boules de rayon $1$ centrées en les $x_i$ recouvrent $B$.
}
\\
\par{
L'intérêt du travail présenté ici est bien de donner une preuve plus élémentaire que celle de \cite{pre06120992} pour le groupe $\Aut(\O)$.
}
\end{rem}

\color{black}
\section{Le théorème de Benzécri}

\par{
Pour montrer le théorème \ref{lem_mar}, nous aurons besoin d'un résultat de Benzécri ainsi que de certains lemmes intermédiaires de sa démonstration, que nous rappelons ici. Toutes les démonstrations de ces résultats se trouvent dans les notes de cours de Goldman \cite{NoteGoldman}.
}
\\
\par{
On définit l'espace $\E$ des convexes marqués comme l'ensemble 
$$\E = \{  (\O,x) \,\mid\, \O \textrm{ est un ouvert proprement convexe de } \PP^n \textrm{ et } x \in \O \},$$

muni de la topologie de Hausdorff héritée de la distance canonique sur $\PP^n$.}

\begin{theo}[Benzécri \cite{MR0124005}]\label{theo_ben}
L'action de $\ss$ sur $\E$ est propre et cocompacte.
\end{theo}

\par{
Un bon exemple  d'ouvert proprement convexe est le suivant. Soit 
$$q(x) = x_1^2+...+x_n^2-x_{n+1}^2$$ 
une forme quadratique de signature $(n,1)$ sur $\R^{n+1}$. L'ensemble $\C = \{ x \in \R^{n+1} \,\mid\, q(x) < 0 \}$ est un cône ouvert convexe de $\R^{n+1}$ et la projection de $\C$ sur $\PP^n$ sera noté $\E_0$. Un \emph{ellipsoïde} de $\PP^n$ est l'image de $\E_0$ par un élément de $\ss$.
}
\\
\par{
La proposition suivante définit la notion d'ellipsoïde d'inertie d'un convexe borné d'un \textsl{espace affine} (si l'énoncé nécessite une métrique euclidienne ``accessoire'', l'ellipsoïde d'inertie ne dépend pas de cette métrique).
}

\begin{prop}
Soit $\O$ un convexe borné de l'espace $\R^n$ euclidien, de centre de gravité $O$. Alors il existe un unique ellipsoïde $\E$, appelé \emph{ellipsoïde d'inertie de} $\O$, tel que, pour toute application affine $\psi: \R^n \rightarrow \R$ telle que $\psi(O)=0$, les moments d'inertie de $\O$ et $\E$ par rapport à $O$ de l'application affine $\psi$ sont égaux, c'est à dire que l'on a l'égalité suivante: $$\int_{\O} \psi^2(x)\ dx = \int_{\E} \psi^2(x)\ dx.$$
\end{prop}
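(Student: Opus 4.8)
The plan is to reduce the statement to a single linear-algebra fact about symmetric matrices. Place $O$ at the origin of $\R^n$. Since $\psi$ is affine with $\psi(O)=0$, it is in fact linear, $\psi(x)=\langle v,x\rangle$ for a unique $v\in\R^n$, so the moment of inertia reads
$$\int_\O \psi^2(x)\,dx \;=\; \int_\O \langle v,x\rangle^2\,dx \;=\; \langle M_\O\,v,\,v\rangle, \qquad M_\O:=\int_\O x\,{}^{t}x\,dx,$$
where $M_\O$ is the symmetric \emph{matrice d'inertie} of $\O$. It is positive definite: for $v\neq 0$ the hyperplane $\{\langle v,x\rangle=0\}$ cannot contain the nonempty open set $\O$, so $\langle M_\O v,v\rangle=\int_\O\langle v,x\rangle^2\,dx>0$. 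The same computation applied to an ellipsoid $\E$ centered at $O$ gives $\int_\E\psi^2=\langle M_\E v,v\rangle$. Because a symmetric matrix is determined by its quadratic form (polarization), the required equality for every $\psi$ is equivalent to the single matrix equation $M_\E=M_\O$.

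It then remains to show that there is exactly one ellipsoid centered at $O$ whose inertia matrix equals a prescribed positive definite symmetric matrix. I would parametrize such ellipsoids as $E_A=\{x\in\R^n:\langle Ax,x\rangle\le 1\}$ with $A$ symmetric positive definite, and compute the moment explicitly. Pushing the unit ball $B$ forward by $A^{-1/2}$ and using the rotation-invariant value $\int_B x\,{}^{t}x\,dx=\tfrac{\omega_n}{n+2}\,\mathrm{Id}$, where $\omega_n=\Vol(B)$, one obtains
$$M_{E_A}\;=\;\frac{\omega_n}{n+2}\,(\det A)^{-1/2}\,A^{-1}.$$
Imposing $M_{E_A}=M_\O$ and taking determinants isolates $\det A$ as an explicit power of $\det M_\O$; substituting back then determines $A^{-1}$, hence $A$, uniquely. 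This yields at once both the existence and the uniqueness of $\E$.

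The one point deserving real care is the hypothesis that $\E$ be centered at $O$, which is part of the notion of ellipsoid of inertia and is exactly what forces uniqueness. Without it the statement is false already in dimension $1$: for $\O=[-L,L]$ one has $O=0$, and every interval $[a,b]$ with $b^{3}-a^{3}=2L^{3}$ realizes the same (unique) value of the moment. The centroid hypothesis pins the ellipsoid at $O$, so that an off-center translate—which would add a rank-one term $\Vol(\E)\,O'\,{}^{t}O'$ to $M_\E$—is excluded. I expect the only genuine work to be the explicit moment computation and the determinant-scaling argument inverting $A\mapsto M_{E_A}$; once $\psi$ is identified with a linear form, the reduction to symmetric matrices is immediate.
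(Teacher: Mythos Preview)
Your argument is correct. The paper, however, does not give its own proof of this proposition: immediately after the statement it declares the result ``tr\`es classique en m\'ecanique du solide'' and refers the reader to Milman--Pajor for a demonstration and further background. There is therefore nothing to compare your approach against; you have supplied a clean self-contained proof where the authors chose to cite.

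Your reduction to the single matrix equation $M_{\E}=M_{\O}$ via polarization is the natural route, and the explicit formula $M_{E_A}=\frac{\omega_n}{n+2}(\det A)^{-1/2}A^{-1}$ together with the determinant trick is exactly how one inverts $A\mapsto M_{E_A}$ on positive-definite matrices. Your remark about the centering hypothesis is also well taken: the proposition as written does not say explicitly that $\E$ is centered at $O$, and without that assumption uniqueness genuinely fails, as your one-dimensional example shows. In the classical definition of the ellipsoid of inertia this centering is always understood, and the paper evidently intends it; you are right to record it as the one point that deserves care.
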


\par{
Cette proposition est très classique en mécanique du solide. Le lecteur intéressé pourra trouver une démonstration, des références, et une description beaucoup plus complète de l'ellipsoïde d'inertie dans \cite{MR1008717}. Nous nous contenterons d'utiliser l'existence d'un tel ellipsoïde et la proposition \ref{etap_ben_2} ci-dessous, que l'on trouve dans \cite{NoteGoldman}. Les deux propositions qui suivent peuvent être vues comme les étapes principales qui mènent au théorème de Benzécri.
}

\begin{prop}[\cite{NoteGoldman}]\label{etap_ben_1}
Soit $(\O,x) \in \E$. Il existe une unique carte affine de $\PP^n$ dans laquelle l'ouvert $\O$ est borné et le point $x$ est le centre de gravité de $\O$.
\end{prop}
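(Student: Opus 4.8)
The plan is to identify, inside the dual projective space, the set of affine charts in which $\O$ is bounded, and to realize the chart we are looking for as the unique critical point of an explicit strictly convex potential. Lift $\O$ to the open convex cone $\Cc \subset \R^{n+1}$ it spans, and fix a lift $\hat{x} \in \Cc$ of $x$. Since $\O$ is properly convex, $\Cc$ is a sharp cone and its dual cone $\Cc^{*} = \{ \xi \in (\R^{n+1})^{*} : \xi(v) > 0 \text{ for all } v \in \overline{\Cc}\setminus\{0\}\}$ is again open, convex and sharp. An affine chart in which $\O$ is bounded is exactly the complement of a hyperplane missing $\overline{\O}$, hence corresponds to a ray $[\xi]$ with $\xi \in \Cc^{*}$; in the chart $[\xi]$ the convex set $\O$ is identified with the bounded slice $\O_{\xi} = \Cc \cap \{\xi = 1\}$, and its centre of gravity is the projective class of the Euclidean centroid $g(\xi) \in \{\xi = 1\}$ of $\O_{\xi}$. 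Rescaling $\xi$ only rescales $g(\xi)$, so $[g(\xi)]$ depends only on $[\xi]$, and the statement becomes: the map $[\xi] \mapsto [g(\xi)]$ takes the value $[x]$ exactly once.

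First I would introduce the potential $\varphi(\xi) = \int_{\Cc} e^{-\xi(v)}\, dv$, which is finite and smooth on $\Cc^{*}$ (the integral converges because each $\xi \in \Cc^{*}$ is bounded below by a positive multiple of the Euclidean norm on $\Cc$). A polar decomposition $v = s w$ with $s = \xi(v)$ and $w \in \O_{\xi}$ turns both $\varphi(\xi)$ and $\int_{\Cc} v\, e^{-\xi(v)}\, dv$ into radial Gamma-integrals times integrals over $\O_{\xi}$, and yields the key identity $\nabla \log \varphi(\xi) = -(n+1)\, g(\xi)$, where I identify the centroid $g(\xi) \in \R^{n+1}$ with a linear form on the space of forms via $\eta \mapsto \eta(g(\xi))$. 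Consequently, restricting $\log \varphi$ to the slice $S = \{ \xi \in \Cc^{*} : \xi(\hat{x}) = 1 \}$, a point $\xi \in S$ is critical if and only if $g(\xi)$ is proportional to $\hat{x}$; since moreover $\xi(g(\xi)) = 1 = \xi(\hat{x})$, this forces $g(\xi) = \hat{x}$, i.e. the chart $[\xi]$ has centre of gravity exactly $x$. Thus the desired charts are precisely the critical points of $\log\varphi$ on $S$.

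Uniqueness is then immediate: $\log\varphi$ is strictly convex on $\Cc^{*}$, because its Hessian $\mathrm{Hess}\,\log\varphi(\xi)[\eta,\eta]$ is the variance of $\eta(v)$ under the probability measure proportional to $e^{-\xi(v)}\, dv$ on $\Cc$, which is positive for every $\eta \neq 0$ as $\Cc$ is open. For existence I would use coercivity: because $\hat{x}$ lies in $\Cc = (\Cc^{*})^{*}$, the functional $\xi \mapsto \xi(\hat{x})$ is positive on $\overline{\Cc^{*}}\setminus\{0\}$, so the slice $S$ has compact closure, and $\varphi(\xi) \to +\infty$ as $\xi$ tends to the relative boundary $\partial \Cc^{*} \cap \{\xi(\hat{x})=1\}$ (there the limiting form vanishes somewhere on $\overline{\Cc}\setminus\{0\}$, so the region $\{\xi \le 1\}\cap\Cc$ becomes unbounded and $\varphi$ blows up). A strictly convex function that blows up at the relative boundary of a convex set with compact closure attains its minimum at a unique interior point, which is its unique critical point; this produces the unique chart. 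The main obstacle is the gradient computation $\nabla\log\varphi = -(n+1)g$: one must differentiate an integral over a domain that moves with $\xi$, which I would handle cleanly through the polar decomposition above (equivalently through the divergence theorem applied to the cap $\{\xi \le 1\}\cap\Cc$), taking care that the Euclidean normalizations cancel in the centroid so that $g(\xi)$ is genuinely the affine centre of gravity of $\O$ in the chart $[\xi]$.
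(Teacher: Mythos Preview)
The paper does not actually prove this proposition: it is quoted from Goldman's lecture notes and no argument is reproduced, so there is no in-text proof to compare against.

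Your argument is correct and is the characteristic-function approach associated with Koecher and Vinberg. The gradient identity $\nabla\log\varphi(\xi)=-(n+1)\,g(\xi)$ does follow from the polar decomposition exactly as you say: writing $v=sw$ with $s=\xi(v)$ and $w\in\O_{\xi}$, the Jacobian contributes $s^{n}$, the two radial integrals are $\Gamma(n+1)$ and $\Gamma(n+2)$, and the affine normalisations cancel in the ratio so that $g(\xi)$ is genuinely the affine centroid. The Hessian computation is the standard second-cumulant identity, and the boundary blow-up of $\varphi$ on $\partial\Cc^{*}$ is the classical property of the characteristic function of a sharp cone; your sketch via unboundedness of $\{\xi\le 1\}\cap\Cc$ becomes a proof once one writes $\varphi(\xi)\ge e^{-1}\,\mathrm{vol}\bigl(\{\xi\le 1\}\cap\Cc\bigr)$. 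The compactness of $\overline{S}$ is also right: since $\hat x$ is interior to $\Cc$, the linear form $\xi\mapsto\xi(\hat x)$ is bounded below by a positive constant on $\overline{\Cc^{*}}\cap\mathbb{S}^{n}$.

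For context, the more elementary route one usually sees (and the one in Goldman's notes) avoids the exponential integral: one studies the centroid map $[\xi]\mapsto[g(\xi)]$ from the dual convex $\O^{*}$ to $\O$ directly, shows it is proper and a local diffeomorphism (its differential is essentially minus the covariance of $\O_{\xi}$), and concludes by a degree argument. Your variational packaging trades that topological step for strict convexity plus coercivity, which gives existence and uniqueness in one stroke.
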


\begin{prop}[\cite{NoteGoldman}]\label{etap_ben_2}
Pour $n\geqslant 1$, il existe des constantes $R_n > r_n > 0$ telles que, pour tout ouvert convexe borné $\O$ de l'espace $\R^n$ euclidien dont le centre de gravité est l'origine $O$ et l'ellipsoïde d'inertie la boule unité, on ait 
$$B_{r_n}(O) \subset \O \subset B_{R_n}(O),$$
où $B_r(O)$ représente la boule de centre $O$ et de rayon $r\geqslant 0$.
\end{prop}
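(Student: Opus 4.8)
The plan is to reformulate the hypothesis and then reduce everything to a single uniform estimate on the circumradius. First I would unwind the normalization: saying that the inertia ellipsoid of $\O$ is the unit ball $B_1(O)$ means exactly that the second-moment matrix of $\O$ coincides with that of $B_1(O)$, i.e. $\int_{\O} x_i x_j\, dx = \frac{V_n}{n+2}\,\delta_{ij}$ for all $i,j$, where $V_n=\Vol(B_1(O))$. In particular $\int_{\O}\langle x,u\rangle^2\,dx=\frac{V_n}{n+2}$ for every unit vector $u$, and summing over an orthonormal basis, $\int_{\O}|x|^2\,dx=\frac{nV_n}{n+2}$. All the hypotheses are invariant under the orthogonal group $O(n)$, so I may rotate freely to align any chosen direction with $e_1$.

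Next, a volume bound that costs almost nothing. Among sets of a given volume, the centered ball minimizes the moment of inertia $\int |x|^2$ about the origin (Schwarz symmetrization). Comparing $\O$ with the centered ball $B$ of the same volume gives $\frac{nV_n}{n+2}=\int_{\O}|x|^2 \ge \int_{B}|x|^2$, which forces $\Vol(\O)\le V_n$. I would also record the elementary centroid-containment property: since $O$ is the centroid, $-\frac{1}{n}\,\O\subseteq\O$, equivalently $h_{\O}(-u)\le n\,h_{\O}(u)$ for every unit $u$ (the simplex shows the constant $n$ is sharp); this keeps $\O$ from being too lopsided about $O$.

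The crux — and the step I expect to be the main obstacle — is an upper bound $\rho^{+}:=\max_{x\in\overline{\O}}|x|\le R_n$ on the circumradius. Let the maximum be attained at $p=\rho^{+}e_1$ and slice by $g(t)=\Vol_{n-1}(\O\cap\{x_1=t\})$, supported on $[a,b]$ with $b=\rho^{+}$ and $|a|\ge\rho^{+}/n$ by centroid-containment. Brunn--Minkowski makes $g^{1/(n-1)}$ concave, so the slices must taper toward the tip $p$. The longitudinal constraint $\int t^2 g(t)\,dt=\frac{V_n}{n+2}$ together with $\int g(t)\,dt=\Vol(\O)\le V_n$ shows that a body of large circumradius is forced to be thin near its far end; but then the transverse constraints $\int_{\O}x_j^2\,dx=\frac{V_n}{n+2}$ for $j\ge 2$, which are bounded below, cannot be met, since a long thin convex body has too little transverse spread. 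Making this competition quantitative through Brunn--Minkowski — bounding the transverse moment $\sum_{j\ge 2}\int_{\O}x_j^2\,dx$ above in terms of the slice volumes $g(t)$ and their concavity-forced tapering — yields $\rho^{+}\le R_n$ with $R_n$ depending only on $n$. A pure compactness argument (Blaschke selection over all $\O$ with centroid $O$ and inertia $B_1(O)$) would package the conclusion neatly, but it stumbles on exactly this point: it needs a uniform bound to apply, and excluding a circumradius escaping to infinity requires the same longitudinal/transverse estimate.

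Finally, the remaining inclusions follow cheaply from $\O\subseteq B_{R_n}(O)$. From $\frac{V_n}{n+2}=\int_{\O}x_2^2\,dx\le R_n^2\,\Vol(\O)$ I get $\Vol(\O)\ge v_n:=\frac{V_n}{(n+2)R_n^2}>0$. And if the distance from $O$ to $\partial\O$ were some $\delta$, then the boundary point realizing it gives a supporting hyperplane tangent to $B_\delta(O)$, so with $u$ its unit normal $h_{\O}(u)=\delta$; centroid-containment then gives $h_{\O}(-u)\le n\delta$, trapping $\O$ in a slab of thickness $\le(n+1)\delta$ inside $B_{R_n}(O)$, whence $\Vol(\O)\le(n+1)\delta\,V_{n-1}R_n^{\,n-1}$. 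Choosing $r_n$ below $v_n/\big((n+1)V_{n-1}R_n^{\,n-1}\big)$ contradicts the volume lower bound unless $\delta\ge r_n$, giving $B_{r_n}(O)\subseteq\O$ and completing the sandwich $B_{r_n}(O)\subseteq\O\subseteq B_{R_n}(O)$.
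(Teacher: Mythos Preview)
The paper does not actually prove this proposition: it is quoted, together with Proposition~\ref{etap_ben_1}, from Goldman's notes \cite{NoteGoldman}, and the authors explicitly send the reader there for all the demonstrations (``Toutes les d\'emonstrations de ces r\'esultats se trouvent dans les notes de cours de Goldman''). So there is no in-paper argument against which to compare your attempt.

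Regarding your proposal on its own merits: the reductions are sound. Step~1 is a correct unpacking of the definition; step~2 (the upper volume bound via rearrangement) and step~3 (the centroid ratio $h_\Omega(-u)\le n\,h_\Omega(u)$) are correct and classical; and step~5 correctly derives both the lower volume bound and the inner-ball inclusion \emph{once} the outer inclusion $\Omega\subset B_{R_n}(O)$ is known.

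The gap is precisely where you flag it, and it is a real one. Your outline for step~4 appeals to a tension between the fixed longitudinal moment and the fixed transverse moments, mediated by the Brunn--Minkowski concavity of $g^{1/(n-1)}$, but the estimate is never carried out. In particular, the natural one-dimensional log-concave bound one extracts from the slicing, namely $b\le C_n\sqrt{I_1/V}$ with $I_1=\int t^2 g(t)\,dt$ and $V=\int g(t)\,dt$, only yields $\rho^{+}\le C_n/\sqrt{\Vol(\Omega)}$; to turn this into an absolute bound you need a \emph{lower} bound on $\Vol(\Omega)$, and in your write-up that lower bound is obtained in step~5 \emph{from} the circumradius bound. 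So the argument as structured is circular if one leans only on the 1D estimate. Breaking the loop requires genuinely using isotropy in all $n$ directions at once---this is exactly the content of the classical (dimension-dependent) upper bound on the isotropic constant, or equivalently the statement that the Legendre ellipsoid controls the body up to dimensional constants (see the reference \cite{MR1008717} cited in the paper). That result is true and, for fixed $n$, not deep; but it is the missing ingredient rather than a by-product of the sketch you give.
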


\begin{defi}
On appellera la carte affine définie par l'hyperplan $\{ x_{n+1} = 1 \}$ \emph{la carte standard} de $\PP^n$. On munit cette carte du produit scalaire euclidien canonique. On note $O$ le point de coordonnée $[0:0:\cdots : 1]$ que l'on choisit comme origine de cette carte.

\noindent On dira qu'un couple $(\O,x)$ est \emph{standard} lorsque:
\begin{enumerate}
\item le convexe $\O$ est borné dans la carte standard de $\PP^n$;
\item le point $x$ est le centre de gravité de $\O$ dans la carte standard;
\item le point $x$ est l'origine $O$ de la carte standard;
\item l'ellipsoïde d'inertie de $\O$ est la boule unité centrée en $x$ dans la carte standard.
\end{enumerate}
On note $\E_s$ l'ensemble des couples standards de $\E$.
\end{defi}

\begin{rema}\label{rem_derniere_minute}
On remarquera que si $(\O,x)$ est un couple standard et si l'élément $\g \in \ss$ est un automorphisme $\O$ qui fixe $x$, alors $\g$ préserve l'ellipsoïde d'inertie de $\O$, c'est-à-dire la boule unité de la carte standard. Par conséquent, les stabilisateurs $\Stab_{\Aut(\O)}(x)$ de $x$ dans $\Aut(\O)$, pour $(\O,x)\in\E_s$, sont \underline{tous} inclus dans le groupe $\textrm{SO}_n(\R)$.
\end{rema}

\begin{prop}\label{prop_stand}
L'ensemble $\E_s$ est compact et pour tout couple $(\O,x) \in \E$, il existe un élément $g \in \ss$ tel que $g \cdot (\O,x) \in \E_s$.
\end{prop}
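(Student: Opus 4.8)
Le plan est de traiter s\'epar\'ement les deux assertions, en m'appuyant sur les propositions \ref{etap_ben_1} et \ref{etap_ben_2} ainsi que sur l'\'equivariance affine du centre de gravit\'e et de l'ellipso\"ide d'inertie.

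Pour l'existence de $g$, je partirais de la proposition \ref{etap_ben_1}: \'etant donn\'e $(\O,x)\in\E$, il existe une carte affine dans laquelle $\O$ est born\'e et $x$ est son centre de gravit\'e. J'enverrais d'abord cette carte sur la carte standard par un \'el\'ement de $\ss$, ce qui assure les conditions (1) et (2) d'un couple standard. Je composerais ensuite par la translation envoyant $x$ sur l'origine $O$: le centre de gravit\'e \'etant \'equivariant par transformation affine, ceci garantit (3) sans d\'etruire (1) ni (2). L'ellipso\"ide d'inertie de $\O$ est alors centr\'e en $O$, et je le transformerais en la boule unit\'e au moyen d'une application lin\'eaire $L\in\mathrm{GL}_n(\R)$; comme l'ellipso\"ide d'inertie est lui aussi \'equivariant par application affine (et ne d\'epend pas de la m\'etrique euclidienne auxiliaire), les quatre conditions sont satisfaites apr\`es cette op\'eration. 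Il reste \`a observer que toute application affine de la carte standard est r\'ealis\'ee par un \'el\'ement de $\ss$: l'application $v\mapsto Mv+w$ provient d'une transformation projective pr\'eservant la carte standard, que l'on normalise pour la rendre de d\'eterminant $1$ sans modifier la transformation projective sous-jacente. Le compos\'e de ces transformations fournit l'\'el\'ement $g$ cherch\'e.

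Pour la compacit\'e de $\E_s$, j'utiliserais la proposition \ref{etap_ben_2}. Pour tout couple standard le point marqu\'e est $O$ et, $O$ \'etant le centre de gravit\'e et la boule unit\'e l'ellipso\"ide d'inertie, on a l'encadrement uniforme $B_{r_n}(O)\subset\O\subset B_{R_n}(O)$. L'ensemble des convexes ferm\'es compris entre ces deux boules fixes est compact pour la topologie de Hausdorff (s\'election de Blaschke); il suffit donc de voir que $\E_s$ est ferm\'e dans ce compact. Soit $(\O_k,O)$ une suite de couples standards convergeant au sens de Hausdorff vers un convexe $\O_\infty$. Comme $\O_\infty$ contient $B_{r_n}(O)$ et est contenu dans $\overline{B_{R_n}(O)}$, c'est un ouvert proprement convexe, d'o\`u (1), et le point marqu\'e demeure $O$, d'o\`u (3).

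Le point technique principal est alors de v\'erifier les conditions (2) et (4) pour $\O_\infty$. J'utiliserais le fait que la convergence au sens de Hausdorff de convexes tous contenus dans une boule fixe et contenant une boule fixe entra\^ine la convergence $L^1$ des fonctions indicatrices $\mathbf{1}_{\O_k}\to\mathbf{1}_{\O_\infty}$, la mesure de la diff\'erence sym\'etrique tendant vers $0$. On en d\'eduit $\int_{\O_k} f\to\int_{\O_\infty} f$ pour toute fonction continue $f$, d'o\`u la continuit\'e du centre de gravit\'e $\frac{1}{\Vol(\O)}\int_{\O} x\,dx$ et de la matrice des moments d'ordre deux $(\int_{\O} x_i x_j\,dx)_{i,j}$. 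Le centre de gravit\'e de $\O_\infty$ est la limite de ceux des $\O_k$, \'egal \`a $O$, d'o\`u (2); et la matrice des moments de $\O_\infty$ est la limite de celles des $\O_k$, toutes associ\'ees \`a la boule unit\'e, donc l'ellipso\"ide d'inertie de $\O_\infty$ est la boule unit\'e, d'o\`u (4). Ainsi $(\O_\infty,O)\in\E_s$, ce qui \'etablit la fermeture, donc la compacit\'e. L'obstacle principal r\'eside pr\'ecis\'ement dans ce passage \`a la limite des int\'egrales, dont la validit\'e repose sur l'encadrement uniforme de la proposition \ref{etap_ben_2}.
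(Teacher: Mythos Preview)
Your argument is correct and follows the same route as the paper, namely Proposition~\ref{etap_ben_1} for the transitivity statement and Proposition~\ref{etap_ben_2} for the compactness. The paper's own proof is considerably more terse: for compactness it simply observes that $\E_s$ sits inside the set of properly convex open sets squeezed between $B_{r_n}(O)$ and $B_{R_n}(O)$ and that Hausdorff limits of convex sets are convex, declaring $\E_s$ closed without spelling out why conditions (2) and (4) survive in the limit; for the existence of $g$ it merely invokes Proposition~\ref{etap_ben_1}. Your version makes both points explicit---the affine equivariance and the $L^1$-continuity of the moment integrals---which is a genuine improvement in rigor rather than a different method.
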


\begin{proof}
Une limite d'ensembles convexes pour la topologie de Hausdorff est encore un ensemble convexe. La proposition \ref{etap_ben_2} montre que l'ensemble $\E_s$ s'identifie à un ensemble $H$ d'ouverts convexes coincés entre les boules $B_{r_n}(O)$ et $B_{R_n}(O)$. Par suite, toute partie de $\PP^n$ adhérente à $H$ est convexe, proprement convexe et l'ensemble $\E_s$ est donc fermé et finalement compact. La proposition \ref{etap_ben_1} montre la seconde partie de cette proposition.
\end{proof}


%
%
%
%
%

\begin{rema}
Le deuxième point de la proposition \ref{prop_stand} montre qu'il suffit de prouver le théorème \ref{lem_mar} pour tout couple standard et c'est ce que nous ferons.
\end{rema}

\section{Démonstration du théorème \ref{lem_mar}}

\subsection{Le lemme de Zassenhaus}
\par{
Comme annoncé, nous allons avoir besoin du lemme de Zassenhaus.
}

\begin{lemm}[Zassenhaus \cite{64.0961.06} ou le théorème 8.16 de \cite{MR0507234}]\label{lem_zas}
Soit $G$ un groupe de Lie. Il existe un voisinage de l'identité $\,\U$ tel que tout groupe discret engendré par des éléments de $\U$ est nilpotent.
\end{lemm}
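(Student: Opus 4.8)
The plan is to work locally in the Lie algebra $\mathfrak{g}$ of $G$ and to exploit the fact that commutators contract quadratically near the identity. Fix a norm $\|\cdot\|$ on $\mathfrak{g}$ and a radius $r>0$ so small that $\exp$ restricts to a diffeomorphism from the ball $B_r=\{X\in\mathfrak{g}:\|X\|<r\}$ onto a neighborhood of the identity; for $g=\exp X$ in this range write $|g|=\|X\|$. The first step is to extract from the Baker--Campbell--Hausdorff formula the key estimate: since $\log(\exp X\,\exp Y\,\exp(-X)\,\exp(-Y))=[X,Y]+(\text{terms of order}\geqslant 3)$ and the bracket is bilinear, there are a constant $C>0$ and a radius $r_0\leqslant r$ such that $|[g,h]|\leqslant C\,|g|\,|h|$ whenever $|g|,|h|<r_0$. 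I then choose $\varepsilon\leqslant r_0$ with $C\varepsilon\leqslant \frac12$ and set $\U=\exp(B_\varepsilon)$; the whole point of this choice is that forming a commutator with an element of $\U$ strictly shrinks the norm.

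Now let $\G$ be a discrete group generated by a subset $S\subseteq\U$; if $\G=\{e\}$ there is nothing to prove. Because $\G$ is discrete and $\exp(\overline{B_\varepsilon})$ is compact, the intersection $\G\cap\exp(\overline{B_\varepsilon})$ is finite, and it contains a non-identity element (a generator), so I may pick $g_0\in(\G\cap\exp(B_\varepsilon))\setminus\{e\}$ minimizing $|g_0|$. For any generator $s\in S$ the estimate gives $|[g_0,s]|\leqslant C\,|g_0|\,|s|<C\varepsilon\,|g_0|\leqslant\frac12|g_0|$, so $[g_0,s]$ again lies in $\G\cap\exp(B_\varepsilon)$ but has norm strictly smaller than that of $g_0$; by minimality $[g_0,s]=e$. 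Hence $g_0$ commutes with every generator and is central in $\G$. Moreover $s\exp(X_0)s^{-1}=\exp(X_0)$ with $X_0=\log g_0$ forces $\mathrm{Ad}(s)X_0=X_0$ for every $s$ (both vectors lying in the region where $\exp$ is injective), so all of $\G$ fixes the nonzero vector $X_0$ under the adjoint representation.

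To finish I would bootstrap this to full nilpotency. One route iterates the contraction directly: every left-normed commutator $[s_1,\ldots,s_k]$ of weight $k$ in the generators stays inside $\exp(B_\varepsilon)$ and has norm at most $\varepsilon\,(C\varepsilon)^{k-1}$, which tends to $0$; by discreteness there is $\delta>0$ with $\G\cap\exp(B_\delta)=\{e\}$, so all such commutators of weight $\geqslant N$ vanish once $\varepsilon(C\varepsilon)^{N-1}<\delta$, and the commutator calculus (the terms $\gamma_k(\G)$ of the lower central series are generated by the left-normed commutators of weight $k$ in a generating set) yields $\gamma_N(\G)=\{e\}$. An alternative, and probably cleaner, route is induction on $\dim G$ using the previous paragraph: since $\G$ fixes $X_0$, it is contained in the closed stabilizer $G_1=\{g:\mathrm{Ad}(g)X_0=X_0\}$; if $G_1\subsetneq G$ we conclude by the inductive hypothesis in lower dimension, while if $G_1=G$ then $\R X_0$ is central in $\mathfrak{g}$ and we may quotient by the central one-parameter subgroup $\exp(\R X_0)$ and again drop dimension.

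The delicate point, and the one I expect to be the main obstacle, is precisely this last passage from ``all sufficiently deep commutators are small, hence trivial'' to genuine nilpotency of $\G$. The quadratic contraction only controls the commutator \emph{generators} of the lower central series, not arbitrary products of them, so one must either invoke the nontrivial commutator-calculus fact that $\gamma_k(\G)$ is generated by weight-$k$ left-normed commutators of a generating set, or carry out the dimension induction carefully (checking that the stabilizer $G_1$ is again a Lie group to which the statement applies, and that the quotient construction preserves discreteness). Everything else, namely the BCH estimate and the use of discreteness to turn smallness into triviality, is routine once $\U$ is fixed with $C\varepsilon\leqslant\frac12$.
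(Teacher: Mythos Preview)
The paper does not give its own proof of this lemma: it is stated with references to Zassenhaus's original 1938 paper and to Theorem~8.16 of Raghunathan's book, and is used as a black box in the proof of Theorem~\ref{lem_mar}. So there is no ``paper's proof'' to compare your proposal against.

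That said, your sketch is essentially the classical argument. The BCH estimate $|[g,h]|\leqslant C\,|g|\,|h|$ and the choice $C\varepsilon\leqslant\tfrac12$ are exactly the standard ingredients, and your observation that discreteness turns ``arbitrarily small'' into ``trivial'' is the heart of the matter. Your Route~1 is fine once one checks the commutator-calculus fact you flag: if every left-normed commutator of weight $n$ in the generators is trivial, then every left-normed commutator of weight $n-1$ is central (it commutes with each generator), so in $G/Z(G)$ all weight-$(n-1)$ left-normed commutators vanish, and a straightforward induction on $n$ gives nilpotency of class $<n$. This closes the gap you were worried about without invoking anything deep. Your Route~2 (induction on $\dim G$ via the adjoint stabilizer of $X_0$, or quotienting by a central one-parameter subgroup) also works, though one must check that the image of $\G$ in the quotient remains discrete; this is where the argument usually passes instead through the statement that $\G$ lies in a connected nilpotent Lie subgroup of $G$, which is the form found in Raghunathan.
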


\subsection{Trois lemmes techniques de théorie des groupes}

\begin{lemm}\label{lem_entre}
Soient $G$ un groupe de Lie, $K$ un sous-groupe compact de $G$ et $m\in\N\smallsetminus\{0\}$. Pour tout voisinage ouvert relativement compact $V$ de $K$, il existe un voisinage ouvert relativement compact $W$ de $K$ tel que:
\begin{enumerate}
\item $W=W^{-1}$;
\item $W^m \subset V$.
\end{enumerate}
\end{lemm}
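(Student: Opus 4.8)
The plan is to translate the condition $W^m\subset V$ into a statement about the $m$-fold multiplication map and then to pull it back to a product neighborhood of $K$ by means of the tube lemma. First I would consider the continuous map $\mu\colon G^m\to G$ given by $\mu(g_1,\dots,g_m)=g_1\cdots g_m$. Since $K$ is a subgroup, $\mu(K\times\cdots\times K)=K\subset V$, so the open set $\mu^{-1}(V)$ contains the compact product $K\times\cdots\times K$ (the $m$-fold Cartesian power of $K$).

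The key step is then Wallace's theorem (the tube lemma for finite products of compact sets): since the compact cube $K\times\cdots\times K$ sits inside the open set $\mu^{-1}(V)$, there exist open neighborhoods $U_1,\dots,U_m$ of $K$ in $G$ with $U_1\times\cdots\times U_m\subset\mu^{-1}(V)$. Setting $U=\bigcap_{i=1}^m U_i$, which is still an open neighborhood of $K$, we get $U\times\cdots\times U\subset\mu^{-1}(V)$, that is $U^m\subset V$. This is where the actual content of the lemma lies; the remaining two requirements are a matter of bookkeeping.

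Finally I would put $W=U\cap U^{-1}\cap V\cap V^{-1}$. This is open and contains $K$, because $K=K^{-1}\subset U\cap U^{-1}$ and $K\subset V\cap V^{-1}$; it satisfies condition (1), since inversion is a homeomorphism and $W^{-1}=U^{-1}\cap U\cap V^{-1}\cap V=W$; it is relatively compact because $W\subset V$ and $\overline{V}$ is compact, so $\overline{W}$ is a closed subset of a compact set; and it satisfies condition (2), since $W\subset U$ forces $W^m\subset U^m\subset V$. The main point to be careful about is precisely the tube-lemma step, where one must extract a \emph{single} open set $U$ serving as every factor, and then the final intersection with $U^{-1}$ and with the relatively compact set $V\cap V^{-1}$ is exactly what guarantees that symmetry and relative compactness are not lost.
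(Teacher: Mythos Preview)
Your proof is correct and is precisely a detailed unpacking of the paper's one-line argument: the paper simply says ``C'est une simple cons\'equence du fait que la multiplication est continue,'' and your use of the $m$-fold multiplication map together with Wallace's tube lemma is exactly how one makes that sentence rigorous. Nothing is missing or different in spirit.
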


\begin{proof}
C'est une simple conséquence du fait que la multiplication est continue.
\end{proof}

\begin{lemm}\label{lem_groupe_compact}
Soient $G$ un groupe compact métrisable et $\V$ un voisinage ouvert de l'identité dans $G$. Il existe $m\in\N\smallsetminus\{0\}$ tel que tout sous-groupe fermé $K$ de $G$ est inclus dans la réunion de $m$ translatés de $\V$ par des éléments de $K$.
\end{lemm}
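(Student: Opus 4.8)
The plan is to use the (two-sided, since $G$ is compact and hence unimodular) Haar measure $\mu$ on $G$, normalized by $\mu(G)=1$, to bound the number of translates uniformly in $K$. The key observation is that a translate $k\mathcal{V}$ of $\mathcal{V}$ by an element $k\in K$ is a subset of the \emph{ambient} group $G$ and carries a $\mu$-measure that does not depend on $k$; so a packing estimate carried out in $G$ will control the covering number of every subgroup $K$ at once.

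First I would shrink the neighborhood. By Lemme \ref{lem_entre} applied with the trivial subgroup $\{e\}$ and $m=2$, there is an open symmetric neighborhood $\mathcal{W}$ of the identity with $\mathcal{W}=\mathcal{W}^{-1}$ and $\mathcal{W}^2\subseteq\mathcal{V}$. Since $\mathcal{W}$ is a nonempty open subset of $G$ and the Haar measure of a compact group charges every nonempty open set, we have $\mu(\mathcal{W})>0$. I then set $m$ to be any integer with $m\geqslant 1/\mu(\mathcal{W})$: this quantity depends only on $G$ and on $\mathcal{V}$, not on $K$.

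Next, fix a closed (hence compact) subgroup $K$ of $G$ and pick a maximal family $k_1,\dots,k_p\in K$ for which the translates $k_1\mathcal{W},\dots,k_p\mathcal{W}$ are pairwise disjoint. These are disjoint subsets of $G$, each of measure $\mu(\mathcal{W})$ by left-invariance, so $p\,\mu(\mathcal{W})\leqslant\mu(G)=1$, that is $p\leqslant 1/\mu(\mathcal{W})\leqslant m$; in particular the family is finite. It remains to see that the enlarged translates cover $K$: if $k\in K$, maximality forces $k\mathcal{W}$ to meet some $k_i\mathcal{W}$, so $kw_1=k_iw_2$ with $w_1,w_2\in\mathcal{W}$, whence $k=k_iw_2w_1^{-1}\in k_i\mathcal{W}\mathcal{W}^{-1}=k_i\mathcal{W}^2\subseteq k_i\mathcal{V}$. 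Thus $K\subseteq\bigcup_{i=1}^p k_i\mathcal{V}$ with all $k_i\in K$, and since $p\leqslant m$ one may repeat some of the translates to express $K$ as a union of exactly $m$ translates of $\mathcal{V}$ by elements of $K$.

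The only genuinely delicate point is the \emph{uniformity} of the bound, and this is exactly what working in the ambient group provides: the inequality $p\,\mu(\mathcal{W})\leqslant 1$ rests on the fact that each $k_i\mathcal{W}$ has the fixed measure $\mu(\mathcal{W})$ regardless of which subgroup $K$ the points $k_i$ lie in. Once this is in hand, everything else is the routine observation that a maximal $\mathcal{W}$-packing yields a $\mathcal{W}^2$-covering, so no further estimate is required.
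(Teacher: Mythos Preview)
Your proof is correct and takes a genuinely different route from the paper's. The paper argues by contradiction: assuming no uniform bound exists, it extracts a sequence of closed subgroups $(K_m)$ each requiring more than $m$ translates, uses metrizability of $G$ to pass to a Hausdorff limit $K$ in the space of closed subgroups, and contradicts the compactness of $K$. Your argument is instead a direct packing estimate via the normalized Haar measure of $G$: a maximal family of points in $K$ with disjoint $\mathcal{W}$-translates has cardinality at most $1/\mu(\mathcal{W})$, and the corresponding $\mathcal{V}$-translates cover $K$. Your approach has two pleasant features: it produces an explicit bound $m=\lceil 1/\mu(\mathcal{W})\rceil$, and it never uses metrizability of $G$---only compactness (for the existence and finiteness of Haar measure). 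One minor quibble: Lemme~\ref{lem_entre} is stated for Lie groups, whereas the $G$ of Lemme~\ref{lem_groupe_compact} is only assumed compact metrizable; but the fact you actually need (a symmetric open $\mathcal{W}$ with $\mathcal{W}^2\subseteq\mathcal{V}$) follows immediately from continuity of multiplication and inversion at the identity in any topological group, so this is purely a citation issue. The paper's approach, for its part, avoids invoking Haar measure and stays in the style of the surrounding arguments (compactness plus contradiction, as in Lemme~\ref{lem_mar2}).
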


\begin{proof}
Tout sous-groupe fermé $K$ de $G$ est compact et recouvert par $\bigcup_{k\in K} k\V$, puisque $\V$ contient l'identité; tout sous-groupe fermé $K$ de $G$ est donc recouvert par un nombre fini de translatés de $\V$ par des éléments de $K$.\\
Supposons maintenant, en raisonnant par l'absurde, qu'il existe une suite $(K_m)_{m\in\N}$ de sous-groupes fermés de $G$ telle que toute réunion de $m$ translatés de $\V$ par des éléments de $K_m$ ne contienne pas tout $K_m$. 

La distance de Hausdorff fait de l'ensemble des fermés de $G$ un espace compact, la topologie induite sur l'ensemble fermé des sous-groupes fermés de $G$ en fait un ensemble compact. On peut donc supposer que la suite $(K_m)_{m\in\N}$ converge pour cette topologie vers un sous-groupe fermé $K$ de $G$. Ce groupe $K$ n'est, par définition de la suite $(K_m)_{m\in\N}$, inclus dans aucune réunion finie de translatés de $\V$ par des éléments de $K$. D'où une contradiction.
\end{proof}

Nous aurons besoin du lemme suivant pour ``gérer l'indice fini''.

\begin{lemm}\label{lem_ber}
Soient $m\in\N\smallsetminus\{0\}$, $\G$ un groupe agissant transitivement sur un ensemble fini $E$, de 
cardinal $|E|>m$, et $S$ un ensemble de générateurs de $\G$, tel que si $g \in S$, alors $g^{-1} \in S$. Si $e$ est un élément de $E$, il existe des éléments $\g_1,...,\g_{m+1} \in S^m$ qui envoient $e$ sur $m + 1$ points distincts de $E$. 
\end{lemm}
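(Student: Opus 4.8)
Le plan est de voir l'action de $\G$ sur $E$ via son graphe de Schreier, c'est-à-dire le graphe dont les sommets sont les points de $E$ et où l'on relie $p$ à $q$ dès que $q = s \cdot p$ pour un certain $s \in S$. Comme $S$ engendre $\G$ et que $\G$ agit transitivement sur $E$, ce graphe est connexe; l'idée est alors d'atteindre les $m+1$ points voulus de proche en proche, en faisant croître d'au moins une unité, à chaque étape, le nombre de points déjà atteints tant que l'on n'a pas recouvert $E$.

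Concrètement, je construirais par récurrence sur $j \in \{1,\dots,m+1\}$ des éléments $\g_1,\dots,\g_j$, chacun produit d'au plus $j-1$ éléments de $S$, tels que les points $\g_1\cdot e,\dots,\g_j\cdot e$ soient deux à deux distincts. Pour $j=1$ on prend $\g_1=1$. Supposons $\g_1,\dots,\g_j$ construits avec $j\leqslant m$, et notons $P=\{\g_1\cdot e,\dots,\g_j\cdot e\}$, de cardinal $j$. Puisque $j\leqslant m<|E|$, on a $P\neq E$, et le point crucial sera l'existence de $s\in S$ et de $i\in\{1,\dots,j\}$ tels que $s\cdot(\g_i\cdot e)\notin P$. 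On posera alors $\g_{j+1}=s\g_i$, qui est un produit d'au plus $j$ éléments de $S$ et envoie $e$ hors de $P$; on dispose ainsi de $j+1$ points distincts atteints par des mots de longueur au plus $j$. En arrêtant à $j=m$, on obtient $\g_1,\dots,\g_{m+1}\in S^m$ réalisant $m+1$ points distincts de $E$.

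L'étape principale — et le seul point réellement substantiel — est donc l'existence du point voisin $s\cdot(\g_i\cdot e)\notin P$. Je la prouverais par l'absurde: si un tel point n'existait pas, on aurait $s\cdot P\subseteq P$ pour tout $s\in S$, donc $s\cdot P = P$ par finitude de $P$ et bijectivité de $s$; comme $S$ est symétrique et engendre $\G$, l'ensemble $P$ serait alors invariant par $\G$ tout entier. Étant non vide et l'action étant transitive, cela forcerait $P=E$, contredisant $P\neq E$. C'est exactement là que servent simultanément la transitivité de l'action et le fait que $S$ engendre $\G$. Il restera seulement à préciser la convention sur $S^m$: les mots produits par la construction sont de longueur au plus $m$, ce qui convient dès lors que $S^m$ désigne la boule de rayon $m$ pour la métrique des mots, c'est-à-dire sous la convention usuelle où $1\in S$ (de sorte que $S^{1}\subseteq S^{2}\subseteq\cdots$).
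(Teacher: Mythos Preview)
Your proof is correct and follows essentially the same approach as the paper's. Both arguments exploit the strict growth of the balls $S_*^n\cdot e$ (with $S_*=S\cup\{1\}$) in the Schreier graph until they fill $E$: the paper phrases this via the sequence $N_n=|S_*^n\cdot e|$ being strictly increasing until stationary at $|E|$, while you carry out the same growth step explicitly by induction; your remark on the convention $S^m=S_*^m$ matches the paper's introduction of $S_*$.
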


\begin{proof}
On pose $S_*=S\cup \{ 1 \}$. Pour $n\in\N$, on note $N_n$ le cardinal de la partie $S_*^n \cdot e$ de l'orbite $\G \cdot e = E$; l'énoncé revient à montrer que $N_m>m$. Or, la suite $(N_n)$ est strictement croissante jusqu'à un certain rang $i\in \N$ pour lequel elle devient stationnaire: pour $n\geqslant i$, $N_n=N_i$. Comme $\G$ agit transitivement sur $E$, on a forcément $N_i=|E|>m$. Ainsi, si $i\leqslant m$, alors $N_m>m$; sinon, le fait que $N_0=1$ et que $(N_n)$ est strictement croissante entre $0$ et $m$ implique aussi $N_m>m$. 
\end{proof}

\subsection{Lemmes clés de géométrie de Hilbert}

On munit le groupe $\ss$ d'une distance $d_{SL}$ invariante à gauche et propre, c'est-à-dire que les boules fermées sont compactes.

\begin{lemm}\label{lem_mar1}
Pour tout $\varepsilon > 0$, il existe $\delta > 0$ tel que, pour tout couple standard $(\O,x) \in \E_s$ et tout élément $\g \in \Aut(\O)$, on ait

$$d_{\O}(x, \g \cdot x) \leqslant \varepsilon\ \Longrightarrow\ d_{\sss}(1,\g) \leqslant \delta.$$
\end{lemm}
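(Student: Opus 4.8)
The plan is to prove the contrapositive-flavoured statement by a soft compactness argument resting on two facts already available: that $\E_s$ is compact (Proposition \ref{prop_stand}) and that the $\ss$-action on $\E$ is proper (Theorem \ref{theo_ben}). The guiding idea is that the hypothesis $\d(x,\g\cdot x)\leqslant\varepsilon$ should force the \emph{marked} convex $\g\cdot(\O,x)=(\O,\g\cdot x)$ to remain in one fixed compact region of $\E$; once this is established, properness of the action immediately bounds $\g$ in $\ss$. Since for a standard couple one has $x=O$, it suffices to prove that the set
$$\mathcal{K}_\varepsilon=\{(\O,y)\in\E \;:\; (\O,O)\in\E_s,\ \d(O,y)\leqslant\varepsilon\}$$
is relatively compact in $\E$, and then to apply Theorem \ref{theo_ben}.

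The main step, which I expect to be the only delicate point, is exactly this relative compactness. By Proposition \ref{etap_ben_2} every standard convex satisfies the uniform bounds $B_{r_n}(O)\subset\O\subset B_{R_n}(O)$, so a marked point $y\in\O$ automatically stays inside the fixed ball $B_{R_n}(O)$ and cannot escape to infinity in the chart; the genuine danger is that $y$ approaches $\partial\O$. I would rule this out by a direct cross-ratio estimate. Suppose $(\O_k,y_k)\in\mathcal{K}_\varepsilon$ with $\O_k\to\O_\infty$ (a proper convex limit, since $\E_s$ is compact) and $y_k\to y_\infty\in\partial\O_\infty$. Let $a_k$ be the forward endpoint of the segment $[O,y_k)$ on $\partial\O_k$. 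Hausdorff convergence of the boundaries forces $a_k\to y_\infty$ as well, since $y_\infty$ is itself the forward exit point of the ray from the interior point $O$; hence $\|a_k-y_k\|\to 0$. The uniform inclusions $B_{r_n}(O)\subset\O_k\subset B_{R_n}(O)$ keep the three remaining factors of the cross-ratio bounded away from $0$ and $\infty$, so $d_{\O_k}(O,y_k)=\tfrac12\ln[a_k:b_k:O:y_k]\to+\infty$, contradicting $d_{\O_k}(O,y_k)\leqslant\varepsilon$. Therefore no marked point escapes to the boundary, $\mathcal{K}_\varepsilon$ is relatively compact, and its closure $\overline{\mathcal{K}_\varepsilon}$ is a genuine compact subset of $\E$ containing $\E_s$.

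It remains to harvest Benzécri's theorem. Since the $\ss$-action on $\E$ is proper, the set $F=\{g\in\ss \;:\; g\cdot\overline{\mathcal{K}_\varepsilon}\cap\overline{\mathcal{K}_\varepsilon}\neq\emptyset\}$ is compact, and I would set $\delta=\sup_{g\in F}d_{\sss}(1,g)<\infty$, which depends only on $\varepsilon$ and $n$. Now take any standard couple $(\O,O)\in\E_s$ and any $\g\in\Aut(\O)$ with $\d(O,\g\cdot O)\leqslant\varepsilon$. Then $(\O,O)\in\E_s\subset\overline{\mathcal{K}_\varepsilon}$, and $\g\cdot(\O,O)=(\O,\g\cdot O)\in\overline{\mathcal{K}_\varepsilon}$ because $\d(O,\g\cdot O)\leqslant\varepsilon$; hence $\g\in F$ and $d_{\sss}(1,\g)\leqslant\delta$, as desired. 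The whole argument is thus \emph{soft} apart from the cross-ratio estimate: the compactness of $\E_s$ and the properness of the action do all the structural work, and the only quantitative input is the uniform non-escape of the marked point $\g\cdot O$ to the boundary of $\O$, which is where I expect the real care to be needed.
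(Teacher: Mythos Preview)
Your proof is correct and follows essentially the same route as the paper: both define a compact neighbourhood of $\E_s$ in $\E$ (your $\overline{\mathcal{K}_\varepsilon}$ is the paper's set $K=\{(\O,x)\in\E : \O\ \text{bounded in the standard chart},\ d_\O(x,x(\O))\leqslant\varepsilon\}$ restricted to standard convexes) and then invoke properness of the $\ss$-action from Theorem~\ref{theo_ben}. You are actually more careful than the paper, which simply asserts that $K$ is compact, whereas your cross-ratio argument genuinely justifies why the marked point cannot escape to the boundary.
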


\begin{proof} 
On note $x(\O)$ le centre de gravité d'un convexe $\O$ borné dans la carte standard. Pour tout $\varepsilon >0$, l'ensemble 
$$K=\{(\O,x)\in \E\ |\ \O\ \textrm{borné dans la carte standard et}\ d_{\O}(x, x(\O)) \leqslant \varepsilon\},$$ 
est un voisinage compact de $\E_s$. Le théorème \ref{theo_ben} affirme que l'action de $\ss$ sur $\E$ est propre. Par conséquent, l'ensemble des éléments de $\ss$ qui envoient un élément de $\E_s$ sur un élément de $K$ est compact, donc borné. 
\end{proof}

Si $x$ est un point d'un ouvert proprement convexe $\O$ de $\PP^n$, on notera $\Stab_{\O}(x)$ le stabilisateur de $x$ dans $\Aut(\O)$.

\begin{lemm}\label{lem_mar2}
Pour tout $\varepsilon > 0$, il existe $\delta > 0$ tel que, pour tout couple standard $(\O,x) \in \E_s$, pour tout $\g \in \Aut(\O)$, on ait
$$d_{\O}(x, \g \cdot x) \leqslant \delta\ \Longrightarrow\ d_{\sss}(\Stab_{\O}(x),\g) \leqslant \varepsilon.$$
\end{lemm}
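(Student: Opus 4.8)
On raisonnerait par l'absurde, en combinant la compacit\'e de $\E_s$ (proposition \ref{prop_stand}) et le lemme \ref{lem_mar1}. Supposons l'\'enonc\'e faux pour un certain $\varepsilon > 0$: il existerait une suite $(\delta_m)$ tendant vers $0$, des couples standards $(\O_m, x_m) \in \E_s$ et des \'el\'ements $\g_m \in \Aut(\O_m)$ tels que $d_{\O_m}(x_m, \g_m \cdot x_m) \leqslant \delta_m$ mais $d_{\sss}(\Stab_{\O_m}(x_m), \g_m) > \varepsilon$. Comme les couples sont standards, on a $x_m = O$ pour tout $m$. Pour $m$ assez grand, $d_{\O_m}(O, \g_m \cdot O) \leqslant 1$, et le lemme \ref{lem_mar1} fournit une borne $d_{\sss}(1, \g_m) \leqslant \delta'$ ind\'ependante de $m$; la distance $d_{\sss}$ \'etant propre, on peut, quitte \`a extraire, supposer que $\g_m$ converge vers un \'el\'ement $\g_\infty \in \ss$. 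Par compacit\'e de $\E_s$, on supposerait de plus que $(\O_m, O)$ converge vers un couple standard $(\O_\infty, O) \in \E_s$.

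On montrerait ensuite que $\g_\infty \in \Stab_{\O_\infty}(O)$. D'une part, la distance de Hilbert d\'epend de fa\c con continue du convexe et des points (les $\O_m$ restant coinc\'es entre deux boules fixes par la proposition \ref{etap_ben_2}), de sorte que $d_{\O_\infty}(O, \g_\infty \cdot O) = \lim_m d_{\O_m}(O, \g_m \cdot O) = 0$, d'o\`u $\g_\infty \cdot O = O$. D'autre part, de $\g_m \cdot \O_m = \O_m$ et de la convergence de $\g_m$ et de $\O_m$ pour la topologie de Hausdorff, on tirerait $\g_\infty \cdot \O_\infty = \O_\infty$, donc $\g_\infty \in \Aut(\O_\infty)$. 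Ainsi $\g_\infty \in \Stab_{\O_\infty}(O)$, et la remarque \ref{rem_derniere_minute} assure que $\g_\infty \in \mathrm{SO}_n(\R)$.

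Pour conclure, il suffirait de produire une suite $h_m \in \Stab_{\O_m}(O)$ convergeant vers $\g_\infty$: on aurait alors $d_{\sss}(\Stab_{\O_m}(O), \g_m) \leqslant d_{\sss}(h_m, \g_m) \to 0$, contredisant l'hypoth\`ese. C'est ici le c\oe{}ur et la principale difficult\'e de la preuve: le stabilisateur de $\O_\infty$ peut \^etre strictement plus gros que la limite des stabilisateurs des $\O_m$ (des sym\'etries peuvent appara\^itre \`a la limite), de sorte qu'une simple semi-continuit\'e des stabilisateurs ne suffit pas. On exploiterait pour cela la remarque \ref{rem_derniere_minute}: tous les stabilisateurs $\Stab_{\O_m}(O) = \Aut(\O_m) \cap \mathrm{SO}_n(\R)$ vivent dans le groupe compact $\mathrm{SO}_n(\R)$, ce qui permet, quitte \`a extraire, de les supposer convergents au sens de Hausdorff vers un sous-groupe ferm\'e $L \subseteq \mathrm{SO}_n(\R)$, tout \'el\'ement de $L$ \'etant alors limite d'\'el\'ements des $\Stab_{\O_m}(O)$. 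Il resterait \`a \'etablir que $\g_\infty \in L$. Pour cela, on s'appuierait sur le fait que $\g_m$ d\'eplace tr\`es peu le point $O$ tout en restant proche du compact $\mathrm{SO}_n(\R)$: \'etant voisin de la rotation $\g_\infty$ qui fixe $O$, l'\'el\'ement $\g_m$ est quasi-elliptique, et un argument de point fixe permettrait de le ``redresser'' en un \'el\'ement de $\Stab_{\O_m}(O)$ proche de lui, donc de voir que $\g_\infty$ est bien approch\'e par des \'el\'ements des stabilisateurs. C'est cette derni\`ere \'etape qui demanderait le plus de soin.
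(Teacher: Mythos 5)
Votre strat\'egie est exactement celle du papier : raisonnement par l'absurde, extraction de sous-suites convergentes gr\^ace \`a la compacit\'e de $\E_s$ et au lemme \ref{lem_mar1}, puis identification de la limite $\g_\infty$ comme \'el\'ement de $\Stab_{\O_\infty}(O)$. La difficult\'e que vous isolez ensuite --- il faudrait approcher $\g_\infty$ par des \'el\'ements des stabilisateurs $\Stab_{\O_m}(O)$, lesquels ne d\'ependent pas contin\^ument du convexe --- est pr\'ecis\'ement le point que la r\'edaction du papier passe sous silence : elle se contente de lister $d_{\sss}(\Stab_{\O}(x),\g)\geqslant \varepsilon_0$ parmi les propri\'et\'es <<~v\'erifi\'ees par les limites~>>, ce qui est exactement la semi-continuit\'e probl\'ematique. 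Sur ce point vous avez donc \'et\'e plus lucide que la preuve publi\'ee.

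Malheureusement, l'\'etape que vous laissez en suspens ne peut pas \^etre compl\'et\'ee, car l'\'enonc\'e tel quel est mis en d\'efaut par l'exemple suivant. Prenez pour $Q$ le pentagone r\'egulier de $\PP^2$, de centre $p$ : son groupe $\Aut(Q)$ est le groupe di\'edral d'ordre $10$. Soit $\sigma$ la rotation d'ordre $5$ et $y$ un point proche de $p$, hors des axes de sym\'etrie et distinct de $p$. Alors $d_Q(y,\sigma\cdot y)\leqslant 2\,d_Q(y,p)$ est arbitrairement petit, tandis que $\Stab_{Q}(y)=\{1\}$ puisque les rotations ne fixent que $p$ et qu'aucune r\'eflexion ne fixe $y$ ; apr\`es standardisation de $(Q,y)$, l'\'el\'ement conjugu\'e \`a $\sigma$ reste donc \`a distance minor\'ee du stabilisateur trivial de l'origine, et aucune suite $h_m\in\Stab_{\O_m}(O)$ ne converge vers $\g_\infty$. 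Votre <<~argument de point fixe~>> redresserait $\g_m$ en un \'el\'ement fixant le point $p_m$ proche de $O$, mais pas $O$ lui-m\^eme, et c'est l\`a que l'affaire \'echoue. Ce que l'argument de compacit\'e d\'emontre r\'eellement est une conclusion plus faible mais suffisante : $\g$ est proche du groupe compact \emph{fixe} $\textrm{SO}_n(\R)$ de la remarque \ref{rem_derniere_minute}, lequel contient tous les $\Stab_{\O}(x)$ pour $(\O,x)\in\E_s$. Comme ce groupe ne d\'epend plus du couple, la limite $\g_\infty\in\textrm{SO}_n(\R)$ donne imm\'ediatement $d_{\sss}(\textrm{SO}_n(\R),\g_m)\to 0$, d'o\`u la contradiction ; et c'est cette version, avec $\textrm{SO}_n(\R)$ \`a la place de $\Stab_{\O}(x)$, qu'il faut utiliser dans la d\'emonstration du th\'eor\`eme \ref{lem_mar}, en recouvrant un voisinage de $\textrm{SO}_n(\R)$ par des translat\'es de $\U$.
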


\begin{proof}
On démontre ce lemme par l'absurde. Supposons qu'il existe une constante $\varepsilon_0 >0$, une suite de couples standards $(\O_n,x_n) \in \E_s$ et une suite d'éléments $\g_n \in \Aut(\O_n)$ tels que

\begin{itemize}
\item $d_{\O_n}(x_n, \g_n \cdot x_n) \underset{n \to \infty}{\longrightarrow} 0$;
\item $d_{\sss}(\Stab_{\O_n}(x_n),\g_n) \geqslant \varepsilon_0$.
\end{itemize}

L'espace $\E_s$ est compact. Le lemme \ref{lem_mar1} montre que la suite $(\g_n)_{n \in \N}$ reste dans un compact de $\ss$. On peut donc supposer que toutes ces suites convergent. Les limites, notées sans indice, vérifient les points suivants:

\begin{itemize}
\item $(\O,x) \in \E_s$;
\item $\g \in \Aut(\O)$;
\item $\g\cdot x = x$;
\item $d_{\sss}(\Stab_{\O}(x),\g) \geqslant \varepsilon_0$.
\end{itemize}
Les trois premiers points montrent que $\g \in \Stab_{\O}(x)$ et le dernier montre que c'est absurde.
\end{proof}

\subsection{Démonstration du théorème \ref{lem_mar}}

On peut maintenant donner une

\begin{proof}[Démonstration du théorème \ref{lem_mar}]
Le lemme \ref{lem_zas} montre qu'il existe un voisinage ouvert $\U$ de l'identité dans $\ss$ tel que tout groupe discret engendré par des éléments de $\U$ est nilpotent. 

Pour tout couple standard $(\O,x) \in \E_s$, le stabilisateur $K$ de $x$ dans $\Aut(\O)$ est un groupe compact inclus dans le groupe compact $G= \textrm{SO}_n(\R)$ (remarque \ref{rem_derniere_minute}). Le lemme \ref{lem_groupe_compact} montre alors qu'il existe $m\in\N\smallsetminus\{0\}$ tel que $K$ est recouvert par $m$ translatés de $\U$ par $K$; de plus, $m$ peut être choisi indépendamment du couple standard $(\O,x)$. 

On fixe maintenant un couple standard $(\O,x) \in \E_s$ et on note $K$ le stabilisateur de $x$ dans $\Aut(\O)$. On note
\begin{equation}\label{reunionv}
V = \bigcup_{i=1}^m k_i \U
\end{equation}  

\noindent la réunion recouvrant $K$ des translatés de $\U$ par les éléments $k_1,\cdots,k_m\in K$; $V$ est un voisinage ouvert relativement compact de $K$.

Pour $\eta > 0$, on note $W_{(\O,x,\eta)} = \{\g \in \ss \mid d_{SL}(\Stab_{\O}(x),\g) \leqslant \eta\}$ le voisinage compact de $K$ de taille $\eta$. Le lemme \ref{lem_entre} montre que pour tout couple $(\O,x) \in \E_s$, il existe un nombre $\eta_{(\O,x)}>0$ tel que $W_{(\O,x,\eta_{(\O,x)})}^{m} \subset V$. L'espace $\E_s$ est compact et un raisonnement par l'absurde montre que l'on peut choisir $\eta_{(\O,x)}$ indépendamment du couple $(\O,x)$. Autrement dit, il existe un $\eta>0$ tel que $W_{(\O,x,\eta)}^m\subset V$ pour tout couple standard $(\O,x)\in\E_s$. Par la suite, on abrègera la notation en $W=W_{(\O,x,\eta)}$.\\

Le lemme \ref{lem_mar2} montre qu'il existe $\varepsilon_n >0$ tel que, pour tout couple $(\O,x) \in \E_s$, l'ensemble $F_{\varepsilon_n}(x) = \{ \g \in \Aut(\O) \,\mid\,  d_{\O}(x, \g \cdot x) \leqslant \varepsilon_n \}$ est inclus dans $W$. 
On va montrer qu'un groupe discret $\G$ engendré par des éléments de $F_{\varepsilon_n}(x)$ est virtuellement nilpotent.\\

Le lemme \ref{lem_zas} montre que le groupe $\Lambda$ engendré par $\G \cap \U$ est nilpotent. On va voir que $\Lambda$ est d'indice fini dans $\G$ inférieur à $m$.  

Le groupe $\G$ agit transitivement sur l'ensemble $E=\Quotient{\G}{\Lambda}$. Supposons que cet ensemble contienne strictement plus de $m$ éléments. Le lemme \ref{lem_ber} montre qu'il existe des éléments $\g_1,...,\g_{m+1} \in \G$, produits d'au plus $m$ éléments de $F_{\varepsilon_n}(x)$, qui envoient $\Lambda$ sur $m+1$ classes à gauche distinctes de $E$. 

Comme $F_{\varepsilon_n}(x) \subset W$ et $W^m \subset V$, les éléments $\g_1,...,\g_{m+1}$ sont dans $V$. Or, le voisinage $V$ est recouvert par $m$ translatés de $\U$ via des éléments de $K$; il existe donc deux éléments distincts parmi $\g_1,...,\g_{m+1}$ qui  appartiennent à un même translaté de $\U$ dans la réunion (\ref{reunionv}). On peut supposer que ce sont les éléments $\g_1$ et $\g_2$ qui sont dans $k_1 \U$; ainsi, on peut écrire $\g_1 = k_1 u_1$, $\g_2=k_1 u_2$ avec $u_1, u_2 \in \G \cap \U$. Il vient que $\g_1^{-1} \g_2  = u_1^{-1} k_1^{-1} k_1 u_2$ et donc $\g_1^{-1} \g_2 \in \Lambda$. On en déduit que les éléments $\g_1$ et $\g_2$ envoient $\Lambda$ sur la même classe à gauche de $E$, d'où une contradiction.  
\end{proof}

\newpage

\bibliographystyle{alpha}

\end{document}